\newtheorem*{Thm}{Theorem}
\newtheorem*{Prop}{Proposition}
\theoremstyle{definition}
\newtheorem*{Rem}{Remark}
\newcommand{\Cs}{\mbox{${\rm C}^\ast$}}
\title{Elementary constructions of non-discrete \Cs -simple groups}
\author{Yuhei Suzuki}
\subjclass[2000]{Primary~ 22D25, Secondary~46L05}
\keywords{\Cs -simplicity, locally compact group, group von Neumann algebra.}
\address{Graduate School of Science,
Chiba University, Inage-ku, Chiba 263-8522, Japan}
\email{suzukiyu@ms.u-tokyo.ac.jp}
\begin{document}
\begin{abstract}
Recently Raum has given the first examples of locally compact non-discrete groups
with the simple reduced group \Cs -algebra, answering a question of de la Harpe.
Here we construct such groups
whose proof relies only on results in the discrete case.
\end{abstract} 
\maketitle
Let $G$ be a locally compact group.
Recall that the left regular representation $\lambda$ of $G$ is the unitary representation
of $G$ on the Hilbert space $L^2(G, \mu)$ acting by the left translation.
Here $\mu$ denotes the left Haar measure of $G$.
This representation induces a
$\ast$-representation of the group algebra $C_c(G)$ on  $L^2(G, \mu)$.
The reduced group \Cs -algebra ${\rm C}^\ast_\lambda(G)$ of $G$ is the operator norm closure of the image of $C_c(G)$ under this representation.
This provides basic and important examples of \Cs -algebras.

A locally compact group is said to be \Cs -simple if its reduced group \Cs -algebra has no proper closed two-sided ideal.
A basic question asks when a given group is \Cs -simple.
The first such a group was given by Powers \cite{Pow}, by showing that
the free groups are \Cs -simple.
His strategy is quite powerful, and until the recent breakthrough result of Kalantar and Kennedy \cite{KK},
his method was basically the only way to show \Cs -simplicity.
Now in the discrete case, fairly satisfactory characterizations of \Cs -simplicity are obtained \cite{Haa}, \cite{KK}, \cite{Ken}.

Recently, among other things,
Raum \cite{Rau} has constructed the first examples of non-discrete \Cs -simple groups,
based on properties of groups acting on trees.
The existence of such a group was asked by de la Harpe (\cite{Har}, Question 5).

In this paper, we establish a quite elementary method to construct
non-discrete \Cs -simple groups.
Our result gives explicit examples, and we only use previously known results in the discrete case.
(In fact, Powers's original result is enough to construct such a group.)
Furthermore, we show that these groups have the unique trace property.
Here we say a locally compact group has the unique trace property
if its reduced group \Cs -algebra has a unique lower semicontinuous semifinite trace up to scaling.

For a compact open subgroup $K$ of a locally compact group $G$,
let $p_K$ denote the image of the normalized characteristic function $\mu(K)^{-1}\chi_K$ in ${\rm C}^\ast_{\lambda}(G)$.
Then $p_K$ is the projection onto the subspace $L^2(G)^K$
of $K$-invariant functions.

To provide non-discrete \Cs -simple groups,
we first establish a criterion for \Cs -simplicity.
\begin{Prop}
Let $G$ be a locally compact group.
Assume we have a decreasing sequence $(K_n)_{n=1}^\infty$ of compact open subgroups of $G$
and an increasing sequence $(L_n)_{n=1}^\infty$ of clopen subgroups of $G$ with the following properties.
\begin{itemize}
\item Each $L_n$ contains $K_n$ and normalizes it.
\item The quotient groups $L_n/K_n$ are \Cs -simple.
\item The intersection $\bigcap_{n=1}^\infty K_n$ is the trivial subgroup $\{e\}$.
\item The union $\bigcup_{n=1}^\infty L_n$ is equal to $G$.
\end{itemize}
Then $G$ is \Cs-simple and has the unique trace property.

\end{Prop}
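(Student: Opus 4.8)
The plan is to realise ${\rm C}^\ast_\lambda(G)$ as the norm closure of an increasing union of corners, each isomorphic to the reduced \Cs -algebra of one of the discrete groups $L_n/K_n$, and then to transport \Cs -simplicity and the unique trace property through this inductive limit. First I would build these corners. Since $L_n$ is open in $G$, the restriction of $\lambda_G$ to $L_n$ is a multiple of $\lambda_{L_n}$, so the inclusion $C_c(L_n)\hookrightarrow C_c(G)$ extends to an isometric embedding ${\rm C}^\ast_\lambda(L_n)\hookrightarrow{\rm C}^\ast_\lambda(G)$, and these embeddings are compatible in $n$. Because $\chi_{K_n}\in C_c(L_n)$, the projection $p_n:=p_{K_n}$ lies in ${\rm C}^\ast_\lambda(L_n)$, and since $L_n$ normalises $K_n$ the unitaries $\lambda(g)$, $g\in L_n$, commute with $p_n$, so $p_n$ is central in ${\rm C}^\ast_\lambda(L_n)$. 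Put $A_n:=p_n\,{\rm C}^\ast_\lambda(L_n)=p_n\,{\rm C}^\ast_\lambda(L_n)\,p_n$. As $K_n$ is open, $L_n/K_n$ is discrete; $p_n$ is the projection onto $L^2(L_n)^{K_n}\cong\ell^2(L_n/K_n)$, and $g\mapsto\lambda(g)p_n$ descends to the left regular representation of $L_n/K_n$, whose generated \Cs -algebra is $A_n$. Hence $A_n\cong{\rm C}^\ast_\lambda(L_n/K_n)$: in particular $A_n$ is simple (as $L_n/K_n$ is \Cs -simple) and unital with unit $p_n$, and it carries a unique tracial state $\tau_n$, which is moreover faithful — here I use the unique trace property of \Cs -simple discrete groups.

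Next I would assemble the inductive limit. From $K_{n+1}\subseteq K_n$ one gets $L^2(G)^{K_n}\subseteq L^2(G)^{K_{n+1}}$, i.e.\ $p_n\le p_{n+1}$, and combined with ${\rm C}^\ast_\lambda(L_n)\subseteq{\rm C}^\ast_\lambda(L_{n+1})$ this gives $A_n\subseteq A_{n+1}$. The sequence $(p_n)_n$ is an approximate unit of ${\rm C}^\ast_\lambda(G)$: for $f\in C_c(G)$ one has $p_n\lambda(f)=\lambda(h_n)$ with $h_n=\mu(K_n)^{-1}\int_{K_n}\lambda(y)f\,dy$, so $\|p_n\lambda(f)-\lambda(f)\|\le\|h_n-f\|_{L^1(G)}$ (as $\|\lambda(\cdot)\|\le\|\cdot\|_{L^1(G)}$) and $\|h_n-f\|_{L^1(G)}\le\sup_{y\in K_n}\|\lambda(y)f-f\|_{L^1(G)}\to 0$, since $\bigcap_n K_n=\{e\}$ forces the decreasing compact sets $K_n$ to lie eventually inside any given neighbourhood of $e$ (finite intersection property); the same holds on the right. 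Finally, every $f\in C_c(G)$ has compact support, hence lies in some $L_m$ because the $L_m$ exhaust $G$; then $f\in{\rm C}^\ast_\lambda(L_m)$, and for $n\ge m$ we have $p_n\lambda(f)p_n\in p_n\,{\rm C}^\ast_\lambda(L_n)=A_n$ while $p_n\lambda(f)p_n\to\lambda(f)$. Therefore ${\rm C}^\ast_\lambda(G)=\overline{\bigcup_n A_n}$ is an inductive limit of the simple \Cs -algebras $A_n$ along injective $\ast$-homomorphisms. \Cs -simplicity is then immediate: for a closed two-sided ideal $J$ one has $J=\overline{\bigcup_n(J\cap A_n)}$ (the standard description of ideals of an inductive limit); if $J\ne 0$ then $J\cap A_n\ne 0$ for some $n$, hence $J\cap A_n=A_n$ by simplicity of $A_n$, so $p_n\in J$; then $J\cap A_m\supseteq A_n\ne 0$ gives $A_m\subseteq J$ for all $m\ge n$, and since $\bigcup_{m\ge n}A_m$ is dense in ${\rm C}^\ast_\lambda(G)$ we conclude $J={\rm C}^\ast_\lambda(G)$.

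For the unique trace property, existence is obtained from the tower: $\tau_{n+1}|_{A_n}$ is a bounded trace on the unital $A_n$, so it equals $\tau_{n+1}(p_n)\tau_n$ with $\tau_{n+1}(p_n)\in(0,1]$ by faithfulness, and rescaling the $\tau_n$ to a compatible family $\sigma_n$ (with $\sigma_{n+1}|_{A_n}=\sigma_n$) produces a densely defined positive trace on $\bigcup_n A_n$ that extends to a lower semicontinuous semifinite trace on ${\rm C}^\ast_\lambda(G)$. For uniqueness, let $T$ be a nonzero lower semicontinuous semifinite trace. Since $p_{n+1}p_n=p_n$, each $p_n$ belongs to the minimal dense ideal of ${\rm C}^\ast_\lambda(G)$, so $c_n:=T(p_n)<\infty$ and $T|_{A_n}$ is a bounded trace on $A_n$, forcing $T|_{A_n}=c_n\tau_n$. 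If some $c_n$ were $0$, then faithfulness of $\tau_{n+1}$ together with $p_n\le p_{n+1}$ would force all $c_m$ with $m\ge n$ to be $0$, so $T$ would vanish on the dense $\ast$-subalgebra $\bigcup_m A_m$ and hence, by lower semicontinuity, everywhere; thus $c_n>0$ for all $n$. Given a second such trace $T'$ with $T'|_{A_n}=c_n'\tau_n$, comparing $T$ and $T'$ on $A_n\subseteq A_{n+1}$ shows that $c_n'/c_n$ does not depend on $n$; call it $\rho$. Then $T'=\rho T$ on $\bigcup_n A_n$.

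The remaining step — and the one I expect to be the main obstacle, alongside the identification $A_n\cong{\rm C}^\ast_\lambda(L_n/K_n)$ — is to upgrade $T'=\rho T$ from the dense $\ast$-subalgebra $\bigcup_n A_n$ to all of ${\rm C}^\ast_\lambda(G)$, which is delicate precisely because $T$ and $T'$ are unbounded weights. For this one uses that a trace is monotone under compression by a projection, so $T(p_n a p_n)\le T(a)$ for $a\ge 0$, while lower semicontinuity and $p_n a p_n\to a$ give $T(a)\le\liminf_n T(p_n a p_n)$; hence $T(a)=\lim_n T(p_n a p_n)$, and similarly for $T'$. For each fixed $n$, $T$ restricts to a bounded functional on the corner $p_n\,{\rm C}^\ast_\lambda(G)\,p_n$, and $p_n a p_n$ is a norm-limit of elements $p_n a_k p_n$ with $a_k\in A_{m_k}$, which lie in $\bigcup_m A_m$ and on which $T'=\rho T$ already holds; therefore $T'(p_n a p_n)=\rho\,T(p_n a p_n)$, and letting $n\to\infty$ gives $T'=\rho T$. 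Consequently ${\rm C}^\ast_\lambda(G)$ has, up to scaling, exactly one lower semicontinuous semifinite trace, i.e.\ $G$ has the unique trace property.
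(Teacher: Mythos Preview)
Your proof is correct and follows essentially the same route as the paper: build the corners $A_n\cong{\rm C}^\ast_\lambda(L_n/K_n)$, show $A_n\subset A_{n+1}$ with dense union in ${\rm C}^\ast_\lambda(G)$, and deduce simplicity of the inductive limit. The only notable differences are cosmetic---you pass through the isometric embedding ${\rm C}^\ast_\lambda(L_n)\hookrightarrow{\rm C}^\ast_\lambda(G)$ and use centrality of $p_n$ there, while the paper works directly on $L^2(G)^{K_n}\cong\ell^2(K_n\backslash G)$ and identifies a multiple of the regular representation---and you supply full details for the approximate unit and for the unique trace property, which the paper leaves as ``similarly shown.''
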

\begin{proof}
We only prove simplicity.
The unicity of trace is similarly shown.
(We recall that for discrete groups, \Cs -simplicity implies the unicity of tracial states on the reduced group \Cs -algebra \cite{BKKO}, \cite{Haa}.)
For each $n$,
let $A_n$ denote the \Cs -subalgebra of ${\rm C}^\ast_{\lambda}(G)$ generated by the set
$\{ \lambda_g p_{K_n}\colon g\in L_n\}$.
Since $L_n$ normalizes $K_n$ and $K_n$ is compact open,
the map $g\in L_n \mapsto \lambda_g p_{K_n} \in A_n$ 
defines a unitary representation $\pi$ of $L_n/K_n$ on $L^2(G)^{K_n}$.
The canonical isomorphism
$L^2(G)^{K_n}\cong \ell^2(K_n\backslash G)$ yields
the unitary equivalence of the left translation $L_n/K_n$-actions.
Since the left translation action of $L_n/K_n$ on $K_n\backslash G$ is free,
this shows the unitary equivalence of $\pi$ and a multiple of the left regular representation.
Hence $A_n$ is isomorphic to ${\rm C}^\ast_{\lambda}(L_n/K_n)$,
which is simple by our assumption.
The inclusions $K_{n+1}\subset K_n \subset L_n \subset L_{n+1}$
imply $A_n\subset A_{n+1}$.
Hence the norm closure $A$ of the increasing union $\bigcup_{n=1}^\infty A_n$ is simple.
Since $(K_n)_{n=1}^\infty$ decreases to the trivial subgroup
and $(L_n)_{n=1}^\infty$ increases to $G$,
we conclude $A={\rm C}^\ast_{\lambda}(G)$.
\end{proof}
\begin{Thm}
For each $n\in \mathbb{N}$,
let $\Gamma_n$ be a discrete group and let $F_n$ be a finite group acting on the group $\Gamma_n$ whose
semidirect product $\Gamma_n \rtimes F_n$ is \Cs -simple.
Set $G:=(\bigoplus_{n=1}^\infty \Gamma_n)\rtimes \prod_{n=1}^\infty F_n$.
Here $\bigoplus_{n=1}^\infty \Gamma_n$ is regarded as a discrete group, $\prod_{n=1}^\infty F_n$ is the compact group equipped with the product topology,
and the action $\prod_{n=1}^\infty F_n \curvearrowright \bigoplus_{n=1}^\infty \Gamma_n$ is the product
of given actions.
Then $G$ is \Cs-simple and has the unique trace property.

\end{Thm}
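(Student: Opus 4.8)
The plan is to deduce the Theorem from the Proposition by producing the two required sequences inside $G$. Write an element of $G$ as a pair $(x,f)$ with $x\in\bigoplus_{m=1}^\infty\Gamma_m$ and $f=(f_m)_m\in\prod_{m=1}^\infty F_m$, so that the topology of $G$ is the product of the discrete topology on $\bigoplus_m\Gamma_m$ with the product topology on $\prod_m F_m$, the action is the product of the given actions (hence acts coordinatewise), and multiplication reads $(x,f)(x',f')=(x\cdot(f\cdot x'),ff')$. For $n\in\mathbb{N}$ I would take
\[
K_n:=\{(e,f)\in G : f_m=e\ \text{for all}\ m\le n\},\qquad
L_n:=\Big(\bigoplus_{m=1}^n\Gamma_m\Big)\rtimes\prod_{m=1}^\infty F_m\ \subseteq\ G.
\]
Since the product action preserves each factor $\Gamma_m$, the set $L_n$ is indeed a subgroup of $G$, and it is clopen because $\bigoplus_{m=1}^n\Gamma_m$ is clopen in the discrete group $\bigoplus_m\Gamma_m$. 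The set $K_n$ is open, being $\{e\}$ times the kernel of the projection $\prod_m F_m\to\prod_{m\le n}F_m$, and it is compact by Tychonoff's theorem; clearly $K_n\cong\prod_{m>n}F_m$. Plainly $K_n\subseteq L_n$, the sequence $(K_n)_n$ decreases with $\bigcap_n K_n=\{e\}$, and $(L_n)_n$ increases with $\bigcup_n L_n=G$. It remains to verify that $L_n$ normalizes $K_n$ and that $L_n/K_n$ is \Cs -simple.

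For the normalization, given $(x,f)\in L_n$ and $(e,g)\in K_n$ one computes
\[
(x,f)(e,g)(x,f)^{-1}=\big(x\cdot((fgf^{-1})\cdot x^{-1}),\,fgf^{-1}\big).
\]
Here $g$, and therefore $fgf^{-1}$, has trivial $m$-th coordinate for every $m\le n$, while $x^{-1}$ lies in $\bigoplus_{m\le n}\Gamma_m$; hence $fgf^{-1}$, lying in $\prod_{m>n}F_m$, acts trivially on $x^{-1}$, and the conjugate equals $(e,fgf^{-1})\in K_n$. Thus $K_n\trianglelefteq L_n$. For the quotient, $K_n$ is a direct factor of $\prod_m F_m=\prod_{m\le n}F_m\times\prod_{m>n}F_m$ acting trivially on $\bigoplus_{m\le n}\Gamma_m$, so
\[
L_n/K_n\ \cong\ \Big(\bigoplus_{m=1}^n\Gamma_m\Big)\rtimes\prod_{m=1}^n F_m\ \cong\ \prod_{m=1}^n(\Gamma_m\rtimes F_m),
\]
the last step because the action is the product of the individual actions. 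This is a finite direct product of \Cs -simple discrete groups.

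To close the argument I would invoke the standard fact that a finite direct product of \Cs -simple discrete groups is again \Cs -simple: indeed ${\rm C}^\ast_\lambda(\Lambda_1\times\Lambda_2)\cong{\rm C}^\ast_\lambda(\Lambda_1)\otimes_{\min}{\rm C}^\ast_\lambda(\Lambda_2)$, and the minimal tensor product of two simple \Cs -algebras is simple, so the claim follows by induction on the number of factors. Hence every $L_n/K_n$ is \Cs -simple, all hypotheses of the Proposition are met, and we conclude that $G$ is \Cs -simple with the unique trace property.

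I do not anticipate a genuine obstacle here: everything reduces to choosing $K_n$ and $L_n$ correctly and then checking the four bullet points of the Proposition, which is routine semidirect-product bookkeeping. The single input from outside the discrete theory of \Cs -simplicity is the simplicity of the minimal tensor product of simple \Cs -algebras, which is classical and hence, as promised, a previously known result.
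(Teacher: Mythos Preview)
Your proof is correct and follows exactly the paper's approach: the paper defines the very same $K_n=\prod_{k>n}F_k$ and $L_n=(\bigoplus_{k\le n}\Gamma_k)\rtimes\prod_k F_k$ and then simply asserts that the hypotheses of the Proposition are ``not hard to check,'' whereas you have carried out that check in full, including the identification $L_n/K_n\cong\prod_{m\le n}(\Gamma_m\rtimes F_m)$ and the tensor-product argument for its \Cs -simplicity.
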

Before the proof, we note that Powers's result \cite{Pow} already gives groups and actions satisfying the conditions in Theorem.
For instance, consider the free product $\mathbb{Z}_2\ast \mathbb{Z}$.
Let $\Gamma$ be the kernel of the quotient homomorphism $\mathbb{Z}_2\ast \mathbb{Z}\rightarrow \mathbb{Z}_2$
given by sending the second free product component to $0$.
Then, since this map has a homomorphism lifting, we have a semidirect product decomposition
$\mathbb{Z}_2\ast \mathbb{Z} = \Gamma \rtimes \mathbb{Z}_2$.
Powers's proof \cite{Pow} shows that ${\rm C}^\ast_\lambda(\mathbb{Z}_2\ast \mathbb{Z})$ is simple and has a unique tracial state.
\begin{proof}[Proof of Theorem]
For each $n\in \mathbb{N}$, set $K_n:=\prod_{k=n+1}^\infty F_k$
and $L_n:=(\bigoplus_{k=1}^n \Gamma_k) \rtimes \prod_{k=1}^\infty F_k$.
(Both are regarded as a clopen subgroup of $G$ in the canonical way.)
Then it is not hard to check that the sequences $(K_n)_{n=1}^\infty$ and $(L_n)_{n=1}^\infty$ satisfy
the conditions in Proposition.
\end{proof}
\subsection*{Group von Neumann algebras}
For a group as in Theorem, in a similar way,
it can be shown that its group von Neumann algebra is a factor of type ${\rm I\hspace{-.1em}I}_\infty$.
This factor is not injective, as a finite corner has a non-injective subfactor.
By modifying our construction, for any rational number $0<q\leq 1$, we can construct a \Cs -simple group
whose group von Neumann algebra is a factor of type ${\rm I\hspace{-.1em}I\hspace{-.1em}I}_q$ as follows.
\begin{proof}[Sketch of the construction]
We only show the case $q<1$. The case $q=1$ then follows by taking an appropriate direct product of such groups (cf. \cite{Con}).
Take $n_1, n_2\in \mathbb{N}$ with $q=n_1/n_2$.
For $i=1, 2$, let $\Gamma_i$ be a discrete group and
let $F_i$ be a finite group of order $n_i$ acting on $\Gamma_i$ such that the semidirect product $\Gamma_i \rtimes F_i$
is \Cs -simple. (Such ones are easily found in a similar way to that in the remark below Theorem.)
Set $H_i:=(\bigoplus_{n\in \mathbb{Z}} \Gamma_i)\rtimes ((\bigoplus_{n\leq 0} F_i)\times (\prod_{n\geq 1} F_i))$ for $i=1, 2$. Then $H_i$ is naturally regarded as a locally compact group.
It is easy to check that $H_i$ satisfies the conditions in Proposition.
Now let $\alpha_i$ be the automorphism of $H_i$ given by the forward shift of the indices $n \in \mathbb{Z}$.
Set $H:=H_1 \times H_2$.
Then $\alpha:=\alpha_1 \times \alpha_2^{-1}$ defines an automorphism of $H$.
Put $G:=H\rtimes _\alpha \mathbb{Z}$.
Obviously $\alpha$ induces automorphisms of ${\rm C}^\ast_\lambda(H)$ and $L(H)$. We denote them by the same symbol $\alpha$.
Then we have isomorphisms
${\rm C}^\ast_\lambda(G)\cong {\rm C}^\ast_\lambda(H)\rtimes_\alpha \mathbb{Z}$ and
$L(G)\cong L(H)\bar{\rtimes}_\alpha \mathbb{Z}$.
Since $L(H)$ is a type ${\rm I\hspace{-.1em}I}_\infty$ factor
and $\alpha$ scales the trace on $L(H)$ at the rate $q$,
Connes's theorem (\cite{Con} Theorem 4.4.1) shows that $L(G)$ is a type ${\rm I\hspace{-.1em}I\hspace{-.1em}I}_q$ factor.
Also, since $\alpha$ scales a semifinite trace on ${\rm C}^\ast_\lambda(H)$,
nonzero powers of $\alpha$ are outer.
Now Kishimoto's theorem (\cite{Kis}, Theorem 3.1) shows the \Cs -simplicity of $G$.
\end{proof}
\begin{Rem}
Raum \cite{Rau} has already constructed \Cs -simple groups whose group von Neumann algebras are factors of types as above.
Also, for any $\lambda\in [0, 1]$, Sutherland \cite{Sut} has constructed a factorial group von Neumann algebra of type ${\rm I\hspace{-.1em}I\hspace{-.1em}I}_\lambda$. 
\end{Rem}
\subsection*{Acknowledgement}
This work was carried out while the author was staying at Mittag-Leffler institute for the program ``Classification of operator algebras: complexity, rigidity, and dynamics''.
He acknowledges the organizers and the institute for the invitation and the kind hospitality.
He also thanks Sven Raum for helpful comments on the first draft and letting him know the reference \cite{Sut}.
This work was supported by JSPS Research Fellowships for Young Scientists (PD 28-4705).

\end{document}